\tikzstyle{dot} = [inner sep=0.5mm,circle,draw,minimum size=1mm]
\tikzstyle{continued}=[dashed]
\newtheorem{thm}{Theorem}[section]
\newtheorem*{thm*}{Theorem}
\newtheorem*{lemma*}{Lemma}
\newtheorem*{prop*}{Proposition}
\newtheorem{lemma}[thm]{Lemma}
\newtheorem{prop}[thm]{Proposition}
\theoremstyle{definition}
\theoremstyle{remark}
\newtheorem*{rem*}{Remark}
\newtheorem*{ex*}{Example}
\newenvironment{ex}{\refstepcounter{thm}\begin{proof}[Example \emph{\thethm}]}{\end{proof}}
\newenvironment{rem}{\refstepcounter{thm}\begin{proof}[Remark \emph{\thethm}]}{\end{proof}}
\numberwithin{equation}{section}
\title{A short proof of the finiteness of Dynkin friezes}
\author{Greg Muller}
\begin{document}

\begin{abstract}
We give a short and elementary proof that every Dynkin diagram admits finitely many (positive integral) friezes. This was originally proven in \cite{GM22} using the geometry of cluster algebras. The proof here provides an explicit (albeit inefficient) bound on values.

\end{abstract}

\maketitle

\section{Recollections}

Let $\Gamma$ be a (finite-type) Dynkin diagram 
with vertices indexed from $1$ to $n$, and let $\mathsf{C}$ be the Cartan matrix of $\Gamma$. 
A \textbf{(positive integral) frieze of type $\Gamma$} is a function\footnote{Here and throughout, $\mathbb{Z}_{[1,n]}:=\{1,2,...,n\}$ and $\mathbb{Z}_{\geq1}:=\{1,2,3,...\}$.}
\[ F: \mathbb{Z}_{[1,n]} \times \mathbb{Z} \rightarrow  \mathbb{Z}_{\geq1}\]
such that the \textbf{mesh relations} hold; that, is, for each $(i,k)\in \mathbb{Z}_{[1,n]} \times \mathbb{Z}$, 
\[ F_{i,k}F_{i,k+1} = 1 + \prod_{j<i} F_{j,k}^{-\mathsf{C}_{i,j}}\prod_{j>i} F_{j,k+1}^{-\mathsf{C}_{i,j}} \]

A frieze can be visualized as values on the vertices of the associated \textbf{repetition quiver}. This is the infinite quiver with vertex set $\mathbb{Z}_{[1,n]}\times \mathbb{Z}$ and arrows $(i,k)\rightarrow (j,k)\rightarrow (i,k+1)$ for all $k\in \mathbb{Z}$ whenever $i<j$ and $-C_{i,j}<0$.
For simply-laced $\Gamma$, the mesh relations are equivalent to requiring that adjacent values in the same row have product equal to 1 plus the product of the intermediate values (see Figure \ref{fig: E8frieze}).

\begin{figure}[h!t]
 \begin{tikzpicture}[scale=.6,x=0.025cm,y=-0.025cm]
\begin{scope}[every node/.style={inner sep=4pt,font=\footnotesize}]
    \node (2_a) at (-30,339) {$\dots$};
    \node (4_a) at (-30,439) {$\dots$};
    \node (6_a) at (-30,469) {$\dots$};
    \node (7_a) at (-30,539) {$\dots$};

    \node (1_0) at (20,289) {$2$};
    \node (2_0) at (70,339) {$5$};
    \node (3_0) at (20,389) {$7$};
    \node (4_0) at (70,439) {$18$};
    \node (5_0) at (20,489) {$41$};
    \node (6_0) at (70,469) {$6$};
    \node (7_0) at (70,539) {$11$};
    \node (8_0) at (20,589) {$4$};
    \node (1_1) at (120,289) {$3$};
    \node (2_1) at (170,339) {$8$};
    \node (3_1) at (120,389) {$13$};
    \node (4_1) at (170,439) {$21$};
    \node (5_1) at (120,489) {$29$};
    \node (6_1) at (170,469) {$5$};
    \node (7_1) at (170,539) {$8$};
    \node (8_1) at (120,589) {$3$};
    \node (1_2) at (220,289) {$3$};
    \node (2_2) at (270,339) {$5$};
    \node (3_2) at (220,389) {$13$};
    \node (4_2) at (270,439) {$18$};
    \node (5_2) at (220,489) {$29$};
    \node (6_2) at (270,469) {$6$};
    \node (7_2) at (270,539) {$11$};
    \node (8_2) at (220,589) {$3$};
    \node (1_3) at (320,289) {$2$};
    \node (2_3) at (370,339) {$3$};
    \node (3_3) at (320,389) {$7$};
    \node (4_3) at (370,439) {$16$};
    \node (5_3) at (320,489) {$41$};
    \node (6_3) at (370,469) {$7$};
    \node (7_3) at (370,539) {$15$};
    \node (8_3) at (320,589) {$4$};
    \node (1_4) at (420,289) {$2$};
    \node (2_4) at (470,339) {$5$};
    \node (3_4) at (420,389) {$7$};
    \node (4_4) at (470,439) {$18$};
    \node (5_4) at (420,489) {$41$};
    \node (6_4) at (470,469) {$6$};
    \node (7_4) at (470,539) {$11$};
    \node (8_4) at (420,589) {$4$};
    
    \node (1_5) at (520,289) {$\dots$};
    \node (3_5) at (520,389) {$\dots$};
    \node (5_5) at (520,489) {$\dots$};
    \node (8_5) at (520,589) {$\dots$};
\end{scope}
\begin{scope}[every node/.style={fill=white,font=\scriptsize},every path/.style={-{Latex[length=1.5mm,width=1mm]}}]
    \path (2_a) edge[] (1_0);
    \path (2_a) edge[] (3_0);
    \path (4_a) edge[] (3_0);
    \path (4_a) edge[] (5_0);
    \path (6_a) edge[] (5_0);
    \path (7_a) edge[] (5_0);
    \path (7_a) edge[] (8_0);
    \path (1_0) edge (2_0);
    \path (3_0) edge (2_0);
    \path (3_0) edge (4_0);
    \path (5_0) edge (4_0);
    \path (5_0) edge (6_0);
    \path (5_0) edge (7_0);
    \path (8_0) edge (7_0);
    \path (2_0) edge[] (1_1);
    \path (2_0) edge[] (3_1);
    \path (4_0) edge[] (3_1);
    \path (4_0) edge[] (5_1);
    \path (6_0) edge[] (5_1);
    \path (7_0) edge[] (5_1);
    \path (7_0) edge[] (8_1);
    \path (1_1) edge (2_1);
    \path (3_1) edge (2_1);
    \path (3_1) edge (4_1);
    \path (5_1) edge (4_1);
    \path (5_1) edge (6_1);
    \path (5_1) edge (7_1);
    \path (8_1) edge (7_1);
    \path (2_1) edge[] (1_2);
    \path (2_1) edge[] (3_2);
    \path (4_1) edge[] (3_2);
    \path (4_1) edge[] (5_2);
    \path (6_1) edge[] (5_2);
    \path (7_1) edge[] (5_2);
    \path (7_1) edge[] (8_2);   
    \path (1_2) edge (2_2);
    \path (3_2) edge (2_2);
    \path (3_2) edge (4_2);
    \path (5_2) edge (4_2);
    \path (5_2) edge (6_2);
    \path (5_2) edge (7_2);
    \path (8_2) edge (7_2);
    \path (2_2) edge[] (1_3);
    \path (2_2) edge[] (3_3);
    \path (4_2) edge[] (3_3);
    \path (4_2) edge[] (5_3);
    \path (6_2) edge[] (5_3);
    \path (7_2) edge[] (5_3);
    \path (7_2) edge[] (8_3);
    \path (1_3) edge (2_3);
    \path (3_3) edge (2_3);
    \path (3_3) edge (4_3);
    \path (5_3) edge (4_3);
    \path (5_3) edge (6_3);
    \path (5_3) edge (7_3);
    \path (8_3) edge (7_3);
    \path (2_3) edge[] (1_4);
    \path (2_3) edge[] (3_4);
    \path (4_3) edge[] (3_4);
    \path (4_3) edge[] (5_4);
    \path (6_3) edge[] (5_4);
    \path (7_3) edge[] (5_4);
    \path (7_3) edge[] (8_4);
    \path (1_4) edge (2_4);
    \path (3_4) edge (2_4);
    \path (3_4) edge (4_4);
    \path (5_4) edge (4_4);
    \path (5_4) edge (6_4);
    \path (5_4) edge (7_4);
    \path (8_4) edge (7_4);
    \path (2_4) edge[] (1_5);
    \path (2_4) edge[] (3_5);
    \path (4_4) edge[] (3_5);
    \path (4_4) edge[] (5_5);
    \path (6_4) edge[] (5_5);
    \path (7_4) edge[] (5_5);
    \path (7_4) edge[] (8_5);
\end{scope}
 \end{tikzpicture}
\caption{A positive integral frieze of type $E_8$}
\label{fig: E8frieze}
\end{figure}
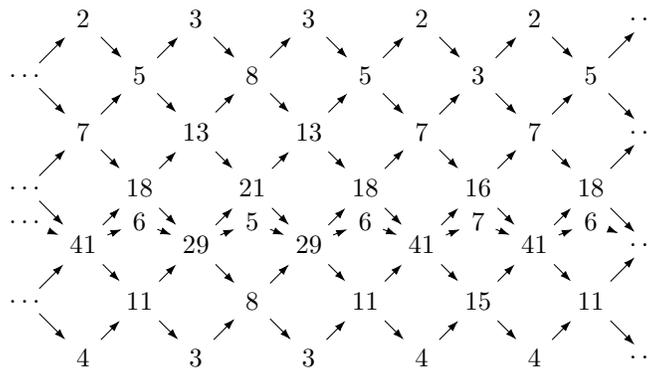

\begin{rem}
This construction implicitly orients the edges in $\Gamma$ via the ordering of the indices to avoid a choice of orientation used in other definitions (which is ultimately irrelevant).
%
\end{rem}


A thoughtful discussion of the motivations and history of Dynkin friezes can be found in \cite{MG15}, so we will skip this. We will need the following fundamental result.


\begin{thm}[{\cite[Theorem 2.7]{MG15}}]
For each Dynkin diagram $\Gamma$, there is an integer $p$ such that every frieze of type $\Gamma$ is $p$-periodic; that is, $F_{i,k}=F_{i,k+p}$.
\end{thm}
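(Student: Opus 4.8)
The plan is to realize every frieze as a specialization of the cluster pattern of type $\Gamma$, and then to read off periodicity from the fact that a finite-type cluster algebra has only finitely many seeds. The first step is purely combinatorial: a frieze $F$ is determined by its restriction to any one column $\{(i,k_0)\}_{1\le i\le n}$. Indeed, the mesh relation at $(n,k)$ has no factor of index exceeding $n$, so it expresses $F_{n,k+1}$ through column $k$; the relation at $(n-1,k)$ then expresses $F_{n-1,k+1}$ through column $k$ together with the already-known $F_{n,k+1}$; proceeding by downward induction on the index, column $k$ determines column $k+1$. Solving the relations at $(1,k),(2,k),\dots,(n,k)$ in that order shows symmetrically that column $k+1$ determines column $k$. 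Since every division occurring here is by an entry of $F$, which is $\ge 1$, two friezes agreeing on one column must be equal.

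Next I would bring in the cluster algebra $\sA$ of type $\Gamma$ with initial seed $(\xx,Q)$, where $\xx=(x_1,\dots,x_n)$ and $Q$ orients every edge of $\Gamma$ from the smaller to the larger index, so that $Q$ is exactly one column of the repetition quiver. Because $\Gamma$ is a tree, mutation at a sink reverses only the incident arrows and creates no new ones; hence $n$ is a sink of $Q$, then $n-1$ becomes a sink, and in general $i$ becomes a sink once $\mu_n,\dots,\mu_{i+1}$ have been applied. Set $T:=\mu_1\circ\mu_2\circ\cdots\circ\mu_n$, with $\mu_n$ applied first. Tracking orientations, each edge of $Q$ is reversed exactly twice during $T$, so $T$ restores the quiver $Q$; write $T^k(\xx,Q)=((x_{1,k},\dots,x_{n,k}),Q)$, so $x_{i,0}=x_i$. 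Every exchange relation in $T$ occurs at a sink of the current quiver and so has the form $x_{i,k}\,x_{i,k+1}=1+(\text{monomial})$, and a direct comparison identifies these with the mesh relations of $\Gamma$: the ``old'' variables $x_{j,k}$ enter for $j<i$, the ``new'' variables $x_{j,k+1}$ for $j>i$, with exponents $-\mathsf{C}_{i,j}$.

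Since $\Gamma$ is of finite type, $\sA$ has only finitely many seeds, and each mutation is an involution, so $T$ restricts to a bijection of the finite set of seeds mutation-equivalent to $(\xx,Q)$ and therefore has finite order; thus $x_{i,k+p}=x_{i,k}$ for all $i,k$, with $p$ depending only on $\Gamma$. To finish, let $F$ be an arbitrary frieze and put $\mathbf{a}:=(F_{1,0},\dots,F_{n,0})\in\ZZ_{\ge1}^{\,n}$. By the Laurent phenomenon each $x_{i,k}$ is a Laurent polynomial in $\xx$, so $x_{i,k}(\mathbf{a})$ is defined; running the column recursion of the first step starting from $\mathbf{a}$ and using $x_{i,0}(\mathbf{a})=a_i=F_{i,0}\ge1$, one checks by induction that every $x_{i,k}(\mathbf{a})$ equals the corresponding entry of $F$ (each specialization of a Laurent-polynomial mesh identity at $\mathbf{a}$ being legitimate because the denominator involved is a nonzero entry of $F$). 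Hence $F_{i,k}=x_{i,k}(\mathbf{a})$ for all $i,k$, and so $F_{i,k+p}=x_{i,k+p}(\mathbf{a})=x_{i,k}(\mathbf{a})=F_{i,k}$.

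I expect the main obstacle to be the bookkeeping in the middle paragraph: verifying that the sink-mutation sequence $T$ simultaneously restores the quiver $Q$ and reproduces the mesh relations verbatim --- getting the split between the ``old'' column $k$ and the ``new'' column $k+1$ right, and, in the non-simply-laced types, tracking the exponents $-\mathsf{C}_{i,j}$ correctly (most cleanly handled with the skew-symmetrizable exchange matrix built from $\mathsf{C}$ rather than with an ordinary quiver). The remaining ingredients are a one-line recurrence and the standard facts that a finite-type cluster algebra has finitely many seeds and satisfies the Laurent phenomenon.
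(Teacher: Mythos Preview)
The paper does not supply its own proof of this theorem; it is quoted as a known result from \cite[Theorem~2.7]{MG15} and used as a black box for the rest of the argument, so there is nothing in the paper to compare your proposal against.

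That said, your sketch is correct and is essentially the standard cluster-theoretic proof of periodicity (Fomin--Zelevinsky's Zamolodchikov periodicity specialized to a single Dynkin factor). The three ingredients --- a single column determines the frieze, the Coxeter mutation $T=\mu_1\circ\cdots\circ\mu_n$ at successive sinks realizes the column shift while restoring the quiver, and finite type forces $T$ to have finite order --- are all sound as you describe them. Two small points worth tightening: for the finite-order step, make explicit that a finite-type cluster algebra has finitely many cluster \emph{variables}, hence finitely many \emph{labeled} seeds, so the bijection $T$ on labeled seeds with underlying quiver $Q$ has finite order; and, as you already note, in types $B,C,F,G$ one must replace the quiver by the skew-symmetrizable exchange matrix built from $\mathsf{C}$ so that the sink-mutation exchange relations reproduce the exponents $-\mathsf{C}_{i,j}$ exactly.
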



Friezes of type $A_n$ were introduced by Coxeter and Conway in \cite{CC73a}, who showed they were in bijection with triangulations of an $(n+3)$-gon, and are therefore counted by the Catalan numbers. This suggests that counts of friezes in other Dynkin types should be meaningful; however, the finiteness of such friezes was open until recently.


\begin{thm}\cite{GM22}
For each Dynkin diagram $\Gamma$, there are finitely many friezes.
\end{thm}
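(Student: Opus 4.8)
The plan is to bound every value $F_{i,k}$ of every frieze of type $\Gamma$ by an explicit constant depending only on $\Gamma$. Since the periodicity theorem says a frieze is determined by the finitely many values $F_{i,k}$ with $0\le k<p$, such a uniform bound immediately yields finiteness.

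First I would linearize the mesh relations by taking logarithms. Abbreviate $Q_{i,k}:=\prod_{j<i}F_{j,k}^{-\mathsf{C}_{i,j}}\prod_{j>i}F_{j,k+1}^{-\mathsf{C}_{i,j}}$, so the mesh relation reads $F_{i,k}F_{i,k+1}=1+Q_{i,k}$. As the $F$'s are positive integers and the exponents $-\mathsf{C}_{i,j}$ are non-negative, $Q_{i,k}\ge 1$, hence $Q_{i,k}\le F_{i,k}F_{i,k+1}\le 2Q_{i,k}$. Writing $f_{i,k}:=\log F_{i,k}\ge 0$, taking logarithms turns this into
\[
\sum_{j<i}(-\mathsf{C}_{i,j})f_{j,k}+\sum_{j>i}(-\mathsf{C}_{i,j})f_{j,k+1}\ \le\ f_{i,k}+f_{i,k+1}\ \le\ \sum_{j<i}(-\mathsf{C}_{i,j})f_{j,k}+\sum_{j>i}(-\mathsf{C}_{i,j})f_{j,k+1}+\log 2;
\]
that is, on the logarithmic scale the mesh relations are a perturbation of the linear (``additive'') mesh relations by an error of at most $\log 2$ in each instance.

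Next I would average over one period. With $p$ as in the periodicity theorem, set $g_i:=\sum_{k=0}^{p-1}f_{i,k}$. Summing the displayed inequality over $k=0,\dots,p-1$ and using $F_{i,k+p}=F_{i,k}$ to rewrite $\sum_k f_{i,k+1}=g_i$, the errors collapse into a single $p\log 2$ and I obtain, for every $i$,
\[
0\ \le\ 2g_i-\sum_{j\ne i}(-\mathsf{C}_{i,j})g_j\ \le\ p\log 2,\qquad\text{i.e.}\qquad 0\ \le\ (\mathsf{C}\mathbf g)_i\ \le\ p\log 2,
\]
where $\mathbf g=(g_1,\dots,g_n)^{\mathsf T}$ and $\mathsf{C}$ is the Cartan matrix. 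Since $\Gamma$ is of finite type, $\mathsf{C}$ is invertible, so $\mathbf g$ lies in the bounded set $\mathsf{C}^{-1}\big([0,p\log 2]^n\big)$; using the classical fact that $\mathsf{C}^{-1}$ has non-negative entries in finite type, this refines to $g_i\le p\log 2\sum_j(\mathsf{C}^{-1})_{ij}$. Finally, because $f_{i,k}\ge 0$ for every $k$ we have $f_{i,k}\le g_i$, hence
\[
F_{i,k}\ \le\ 2^{\,p\sum_j(\mathsf{C}^{-1})_{ij}}
\]
for all $i,k$ — an explicit (and very inefficient) bound depending only on $\Gamma$ — and finiteness follows.

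The substantive point, and the step I expect to require the most care, is the passage through the first two steps: the elementary observation that $1+Q$ lies between $Q$ and $2Q$ is what lets logarithms convert the multiplicative mesh relations into a bounded perturbation of a linear system, and summing over a full period is what turns the accumulated errors into one global constant that the invertibility — indeed the positivity of the inverse — of the finite-type Cartan matrix can absorb. Everything else is bookkeeping: the periodicity theorem is quoted, and the non-negativity of $\mathsf{C}^{-1}$ (or, if one is content with finiteness and does not want the explicit bound, merely the invertibility of $\mathsf{C}$) is standard.
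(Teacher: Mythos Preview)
Your argument is correct and is essentially identical to the paper's: your $g_i$ is $p$ times the paper's $a_i(F)$ (up to choice of logarithm base), your inequality $0\le(\mathsf C\mathbf g)_i\le p\log 2$ is exactly Lemma~\ref{lemma}, and your final bound $F_{i,k}\le 2^{p\sum_j(\mathsf C^{-1})_{ij}}$ is exactly Proposition~\ref{prop}. The only cosmetic difference is that the paper averages over a period while you sum, and the paper obtains a strict lower bound where you have a non-strict one (irrelevant for finiteness).
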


In \cite{GM22}, finiteness was proven non-constructively using the geometry of cluster algebras; specifically, by showing they form a closed and discrete subset of a compact set (the \emph{superunitary region} of the associated cluster algebra). In this note, we give an elementary proof of finiteness by explicitly bounding the size of the entries of a frieze.



\section{Finiteness}

\subsection{Average logarithms}

Consider a positive integral frieze $F$ of type $\Gamma$ with period $p$. For each vertex $i$ of $\Gamma$, let $a_i(F)$ denote the average of the base 2 logarithms of the entries in the $i$th row of $F$; that is,
\[ a_i(F) := 
\frac{1}{p} \sum_{k=1}^p 
\log_2(F_{i,k}) 
=
\frac{1}{p} 
\log_2\left( \prod_{k=1}^p F_{i,k}\right) 
\]
By periodicity, the sum could equivalently be taken over any sequence of $p$-many consecutive entries in the $i$th row of $F$. Let $\vec{\mathbf{a}}(F)$ denote the $n$-vector whose $i$th coordinate is $a_i(F)$.




\begin{lemma}\label{lemma}
Each entry of the product  $\mathsf{C}\vec{\mathbf{a}}(F)$ lies in the half-open interval $(0,1]$.
\end{lemma}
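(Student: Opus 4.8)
The plan is to take the base-2 logarithm of the mesh relation, average over a full period, and exploit the AM–GM-type convexity of the logarithm together with positivity of the frieze entries. Concretely, fix a vertex $i$ and write the mesh relation at $(i,k)$ as
\[ F_{i,k}F_{i,k+1} = 1 + \prod_{j<i} F_{j,k}^{-\mathsf{C}_{i,j}}\prod_{j>i} F_{j,k+1}^{-\mathsf{C}_{i,j}}. \]
Taking $\log_2$ and summing over $k=1,\dots,p$, the left-hand side contributes $\sum_k \log_2(F_{i,k}) + \sum_k \log_2(F_{i,k+1})$, which by $p$-periodicity equals $2p\,a_i(F)$. On the right, the product $\prod_{j<i} F_{j,k}^{-\mathsf{C}_{i,j}}\prod_{j>i} F_{j,k+1}^{-\mathsf{C}_{i,j}}$ has $\log_2$ equal to $\sum_{j\neq i}(-\mathsf{C}_{i,j})\log_2(F_{j,\bullet})$, and summing over $k$ (again using periodicity to ignore the index shift $k$ vs.\ $k+1$) gives $-p\sum_{j\neq i}\mathsf{C}_{i,j}\,a_j(F) = p\bigl(\mathsf{C}\vec{\mathbf a}(F)\bigr)_i - p\,\mathsf{C}_{i,i}a_i(F) = p\bigl(\mathsf{C}\vec{\mathbf a}(F)\bigr)_i - 2p\,a_i(F)$, since $\mathsf{C}_{i,i}=2$. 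So after dividing by $p$, the quantity $b_i := \bigl(\mathsf{C}\vec{\mathbf a}(F)\bigr)_i$ is exactly $\frac{1}{p}\sum_{k=1}^p \log_2\!\bigl(1 + M_{i,k}\bigr)$, where $M_{i,k} := \prod_{j<i} F_{j,k}^{-\mathsf{C}_{i,j}}\prod_{j>i} F_{j,k+1}^{-\mathsf{C}_{i,j}}$ is a positive real with $\frac{1}{p}\sum_k \log_2 M_{i,k} = -\sum_{j\neq i}\mathsf{C}_{i,j}a_j(F) = b_i - 2a_i(F)$.

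For the upper bound $b_i \le 1$: from the mesh relation, $F_{i,k}F_{i,k+1} = 1 + M_{i,k} \ge 1 + M_{i,k}$ trivially, but I want the reverse direction — observe instead that $F_{i,k}F_{i,k+1}\ge 2$ is false in general; the clean route is $1 + M_{i,k} = F_{i,k}F_{i,k+1}$, so $\log_2(1+M_{i,k}) = \log_2(F_{i,k}) + \log_2(F_{i,k+1})$, hence averaging gives $b_i = 2a_i(F)$ — wait, that contradicts the above, so let me instead use the two expressions for $b_i$ jointly. Actually the correct deduction: averaging $\log_2(1+M_{i,k})$ one way gives $b_i$ from the right-hand side computation, and the left-hand side gives $b_i$ directly as $2a_i(F) - 2a_i(F) + b_i$; the real content is comparing $\frac1p\sum_k\log_2(1+M_{i,k})$ against $\frac1p\sum_k\log_2(M_{i,k})$. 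Since $\log_2(1+x) > \log_2(x)$ for all $x>0$, we get $b_i > b_i - 2a_i(F)$, i.e.\ $a_i(F) > 0$; and since $F_{i,k}\ge 1$ forces $a_i(F)\ge 0$ with more care. For the genuine bounds I would argue: the entries $F_{i,k}$ are positive integers, so $F_{i,k}F_{i,k+1}\ge 1$, giving $M_{i,k} = F_{i,k}F_{i,k+1} - 1 \ge 0$; if any $M_{i,k}=0$ then $F_{i,k}F_{i,k+1}=1$ so both equal $1$, and more importantly $\log_2(1+M_{i,k}) = 0$. Averaging, $b_i \ge 0$, with equality only if every $M_{i,k}=0$; but one checks the mesh relations cannot all force row $i$ identically $1$ unless… — here is the crux.

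The main obstacle I anticipate is pinning down the two strict/non-strict endpoints precisely, i.e.\ showing $b_i > 0$ (strict) and $b_i \le 1$. For $b_i \le 1$: I expect to use $\log_2(1+M_{i,k}) = \log_2(F_{i,k}) + \log_2(F_{i,k+1})$ combined with the fact that over the period each row-$i$ entry appears, and that a frieze row cannot be constant equal to a large value — more precisely, I would bound $\frac1p\sum_k\log_2(1+M_{i,k}) \le \frac1p\sum_k\bigl(\log_2 M_{i,k} + \log_2(1 + 1/M_{i,k})\bigr)$ and show $\frac1p\sum_k \log_2(1+1/M_{i,k}) \le 2a_i(F) - (\text{something})$, using concavity of $x\mapsto\log_2(1+1/x)$ (Jensen) to reduce to the single quantity $\frac1p\sum M_{i,k}$ or its log, then relate back via the identity $M_{i,k} = F_{i,k}F_{i,k+1}-1$. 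I would also invoke the periodicity theorem (Theorem from \cite{MG15}) at the outset to guarantee $p$ exists and all sums are finite. For $b_i > 0$ strict, the inequality $\log_2(1+x) > \log_2 x$ handles it directly once I confirm the averaged identity $b_i = \frac1p\sum_k\log_2(1+M_{i,k})$ and $b_i - 2a_i(F) = \frac1p\sum_k\log_2 M_{i,k}$, together with $a_i(F)\ge 0$; combining, $b_i = (b_i - 2a_i(F)) + \frac1p\sum_k\log_2(1 + 1/M_{i,k}) > 0$ provided not all $M_{i,k}$ are $+\infty$, which is automatic. So the delicate step is the upper bound, and I expect to need a careful Jensen/AM–GM argument there rather than anything structural about Dynkin type.
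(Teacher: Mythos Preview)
Your setup is the right one, but there is a sign slip that derails the rest. You write $-\sum_{j\neq i}\mathsf{C}_{i,j}a_j(F) = b_i - 2a_i(F)$; in fact $(\mathsf{C}\vec{\mathbf a}(F))_i = 2a_i(F) + \sum_{j\neq i}\mathsf{C}_{i,j}a_j(F)$, so the correct identity is $\frac1p\sum_k\log_2 M_{i,k} = 2a_i(F) - b_i$, and hence (using the mesh relation $\frac1p\sum_k\log_2(1+M_{i,k}) = 2a_i(F)$) one gets
\[
b_i \;=\; \frac{1}{p}\sum_{k=1}^p \log_2\!\left(1+\frac{1}{M_{i,k}}\right),
\]
not $b_i = \frac1p\sum_k\log_2(1+M_{i,k})$ as you claimed. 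Once you have this correct identity, no Jensen or AM--GM is needed at all.

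The second, more substantive, gap is that you never use the crucial fact $M_{i,k}\ge 1$. Each $F_{j,k}$ is a positive \emph{integer}, hence $\ge 1$, and each exponent $-\mathsf{C}_{i,j}$ for $j\neq i$ is a nonnegative integer; therefore $M_{i,k}$ is a product of integers $\ge 1$, so $M_{i,k}\ge 1$ (in particular $M_{i,k}$ can never be $0$, contrary to the case you worried about). From $M_{i,k}\ge 1$ one gets immediately $1 < 1 + \tfrac{1}{M_{i,k}} \le 2$, i.e.\ $0 < \log_2(1+1/M_{i,k}) \le 1$, and averaging over $k$ yields $0 < b_i \le 1$. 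Equivalently, in the paper's formulation, $M_{i,k} < 1 + M_{i,k} \le 2M_{i,k}$, and plugging these bounds into $2a_i(F) = \frac1p\sum_k\log_2(1+M_{i,k})$ gives the result in one line. The convexity machinery you were reaching for is unnecessary; the entire upper bound hinges on the single elementary observation $M_{i,k}\ge 1$.
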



\begin{proof}
For each $i\in \Gamma_0$, $2a_i(F)$ is equal to 
\[
\begin{aligned}
\frac{1}{p} \log_2\left( \prod_{k=1}^p F_{i,k}^2\right) 
=\frac{1}{p} \log_2\left( \prod_{k=1}^pF_{i,k}F_{i,k+1}\right) 
=\frac{1}{p} \log_2\left( \prod_{k=1}^p \left(1+ 
\prod_{j<i} F_{j,k}^{-\mathsf{C}_{i,j}}\prod_{j>i} F_{j,k+1}^{-\mathsf{C}_{i,j}}
\right)\right) 
\end{aligned}
\]
Since each $F_{i,k}\geq1$, we have bounds
\[ 
\prod_{j<i} F_{j,k}^{-\mathsf{C}_{i,j}}\prod_{j>i} F_{j,k+1}^{-\mathsf{C}_{i,j}}
<
1+ 
\prod_{j<i} F_{j,k}^{-\mathsf{C}_{i,j}}\prod_{j>i} F_{j,k+1}^{-\mathsf{C}_{i,j}}
\leq 2\prod_{j<i} F_{j,k}^{-\mathsf{C}_{i,j}}\prod_{j>i} F_{j,k+1}^{-\mathsf{C}_{i,j}}
\]
Plugging the lower bound into the formula for $2a_i(F)$ gives
\[
\begin{aligned}
2a_i (F)
&>
\frac{1}{p} \log_2\left( \prod_{k=1}^p \left(
\prod_{j<i} F_{j,k}^{-\mathsf{C}_{i,j}}\prod_{j>i} F_{j,k+1}^{-\mathsf{C}_{i,j}}
\right)\right) 
=
\frac{1}{p} \log_2\left( \prod_{k=1}^p \left(
\prod_{j\neq i} F_{j,k}^{-\mathsf{C}_{i,j}}
\right)\right) \\
&=
\sum_{j \neq i}-\mathsf{C}_{i,j}
\left(\frac{1}{p} \log_2\left( 
\prod_{k=1}^p F_{j,k}
\right)\right)
=
-\sum_{j \neq i}\mathsf{C}_{i,j}
a_j(F)
\end{aligned}
\]
Plugging the upper bound into the formula for $2a_i(F)$ gives
\[
\begin{aligned}
2a_i (F)
&\leq
\frac{1}{p} \log_2\left( \prod_{k=1}^p \left(
2\prod_{j<i} F_{j,k}^{-\mathsf{C}_{i,j}}\prod_{j>i} F_{j,k+1}^{-\mathsf{C}_{i,j}}
\right)\right) 
=
\frac{1}{p} \log_2\left( 2^p\prod_{k=1}^p \left(
\prod_{j\neq i} F_{j,k}^{-\mathsf{C}_{i,j}}
\right)\right) \\
&=
1+
\sum_{j \neq i}-\mathsf{C}_{i,j}
\left(\frac{1}{p} \log_2\left( 
\prod_{k=1}^p F_{j,k}
\right)\right)
=1-
\sum_{j \neq i}\mathsf{C}_{i,j}
a_j(F)
\end{aligned}
\]
Therefore, for each $i\in \Gamma_0$, we have bounds
\[
0
< 2a_i(F)+\sum_{j \neq i}\mathsf{C}_{i,j}
a_j(F)
\leq
1
\]
Since the $i$th entry of $\mathsf{C}\vec{\mathbf{a}}(F)$ is equal to $2a_i(F)+\sum_{j\neq i} \mathsf{C}_{i,j}a_{j}(F) $, this completes the proof.
\end{proof}

\begin{ex}\label{ex: E8a}
Consider $\Gamma=E_8$, with Cartan matrix $\mathsf{C}$ and its inverse below.
\[
\mathsf{C}
:=
\begin{bmatrix}
2 & 0 & -1 & 0 & 0 & 0 & 0 & 0 \\
0 & 2 & 0 & -1 & 0 & 0 & 0 & 0 \\
-1 & 0 & 2 & -1 & 0 & 0 & 0 & 0 \\
0 & -1 & -1 & 2 & -1 & 0 & 0 & 0 \\
0 & 0 & 0 & -1 & 2 & -1 & 0 & 0 \\
0 & 0 & 0 & 0 & -1 & 2 & -1 & 0 \\
0 & 0 & 0 & 0 & 0 & -1 & 2 & -1 \\
0 & 0 & 0 & 0 & 0 & 0 & -1 & 2 \\
\end{bmatrix}
\hspace{.5cm}
\mathsf{C}^{-1}
=
\begin{bmatrix}
4 & 5 & 7 & 10 & 8 & 6 & 4 & 2 \\
5 & 8 & 10 & 15 & 12 & 9 & 6 & 3 \\
7 & 10 & 14 & 20 & 16 & 12 & 8 & 4 \\
10 & 15 & 20 & 30 & 24 & 18 & 12 & 6 \\
8 & 12 & 16 & 24 & 20 & 15 & 10 & 5 \\
6 & 9 & 12 & 18 & 15 & 12 & 8 & 4 \\
4 & 6 & 8 & 12 & 10 & 8 & 6 & 3 \\
2 & 3 & 4 & 6 & 5 & 4 & 3 & 2
\end{bmatrix}
\]
Note that this implicitly orders the vertices of $\Gamma$. For the frieze $F$ in Figure \ref{fig: E8frieze}, 
\[
\vec{\mathbf{a}}(F)
=
\begin{bmatrix}
1.79248 \\
2.57480 \\
3.45644 \\
5.10777 \\
4.18304 \\ 
3.25390 \\ 
2.30720 \\
1.29248 \\
\end{bmatrix}
\hspace{1cm}
\mathsf{C}\vec{\mathbf{a}}(F)
=
\begin{bmatrix}
0.12852 \\
0.04184 \\
0.01263 \\
0.00125 \\
0.00442 \\
0.01755 \\
0.06803 \\
0.27776
\end{bmatrix}
\qedhere
\]
\end{ex}

\subsection{Bounds}


Since $\Gamma$ is Dynkin, its Cartan matrix $\mathsf{C}$ is invertible with positive integer entries (e.g. \cite[Table 2]{OV90}).
%
Let $b_i$ denote the sum of the $i$th row of $\mathsf{C}^{-1}$; that is,
\[ b_i := \sum_{j=1}^n \mathsf{C}_{i,j}^{-1} \]

\begin{prop}\label{prop}
For all $i$ and $n$,
$a_i(F) \leq b_i$ and $F_{i,k}\leq 2^{pb_i}$.
%
\end{prop}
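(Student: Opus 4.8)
The plan is to feed the inequality from Lemma~\ref{lemma} through the inverse Cartan matrix and then unwind the definition of $a_i(F)$. First, Lemma~\ref{lemma} says that every entry of $\mathsf{C}\vec{\mathbf{a}}(F)$ lies in $(0,1]$; in particular, writing $\vec{\mathbf{1}}$ for the all-ones $n$-vector, we have the componentwise inequality $\mathsf{C}\vec{\mathbf{a}}(F) \leq \vec{\mathbf{1}}$. Now I would invoke the fact recalled just above the proposition, that $\mathsf{C}^{-1}$ has (positive, hence) nonnegative integer entries. Left multiplication by a matrix with nonnegative entries is order-preserving on $\mathbb{R}^n$, so applying $\mathsf{C}^{-1}$ to both sides of the displayed inequality yields
\[ \vec{\mathbf{a}}(F) = \mathsf{C}^{-1}\bigl(\mathsf{C}\vec{\mathbf{a}}(F)\bigr) \leq \mathsf{C}^{-1}\vec{\mathbf{1}}. \]
The $i$th entry of $\mathsf{C}^{-1}\vec{\mathbf{1}}$ is precisely the $i$th row sum of $\mathsf{C}^{-1}$, which is $b_i$ by definition, so $a_i(F) \leq b_i$.

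For the bound on individual entries, fix $i$ and $k$. Since $F$ takes values in $\mathbb{Z}_{\geq 1}$, every factor in the product $\prod_{\ell=1}^p F_{i,\ell}$ is at least $1$, and by $p$-periodicity we may assume $k \in \{1,\dots,p\}$, so
\[ F_{i,k} \leq \prod_{\ell=1}^p F_{i,\ell} = 2^{p\,a_i(F)} \leq 2^{p b_i}, \]
where the middle equality is the definition of $a_i(F)$ and the last inequality is the bound just established.

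I do not expect a genuine obstacle here: the argument is essentially two lines once the Lemma is in hand. The only subtlety worth stating explicitly is that the passage from a bound on $\mathsf{C}\vec{\mathbf{a}}(F)$ to a bound on $\vec{\mathbf{a}}(F)$ relies on the entrywise nonnegativity of $\mathsf{C}^{-1}$, which is special to Dynkin type; the rest is the elementary observation that a product of reals $\geq 1$ dominates each of its factors.
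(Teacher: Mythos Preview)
Your proof is correct and essentially identical to the paper's: both apply $\mathsf{C}^{-1}$ to the componentwise inequality $\mathsf{C}\vec{\mathbf{a}}(F)\leq\vec{\mathbf{1}}$ from the Lemma, using the positivity of the entries of $\mathsf{C}^{-1}$ to preserve the inequality, and then bound each $F_{i,k}$ by the full product $\prod_{\ell=1}^p F_{i,\ell}=2^{p\,a_i(F)}$ since all entries are at least $1$. The paper phrases the first step as ``each coordinate attains its maximum on $\mathsf{C}^{-1}[0,1]^n$ at $\mathsf{C}^{-1}(1,\dots,1)$,'' but this is the same monotonicity observation you make.
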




\begin{proof}
Since the entries of $\mathsf{C}^{-1}$ are positive, increasing any coordinate of a vector $\vec{\mathsf{v}}$ increases each coordinate of the product $\mathsf{C}^{-1}\vec{\mathsf{v}}$. Therefore, each coordinate attains its maximum value on $\mathsf{C}^{-1}[0,1]^n$ at 
$ \mathsf{C}^{-1} (1,1,..,1)$.
The $i$th coordinate of this point is $b_i$, so $a_i(F)\leq b_i$.
Raising both sides to the $p$, $\prod_{k=1}^pF_{i,k}\leq 2^{pb_i}$. Since each $F_{i,k}\geq1$, this implies that $F_{i,k}\leq 2^{pb_i}$.
%
\end{proof}

\begin{thm}
There are finitely many positive integral friezes of type $\Gamma$.
\end{thm}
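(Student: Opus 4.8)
The plan is that essentially all of the work is already done: Proposition~\ref{prop} furnishes a \emph{uniform} upper bound on the entries of any frieze of type $\Gamma$, and the periodicity theorem of \cite{MG15} guarantees that a frieze is determined by finitely much data. So only the bookkeeping remains, and I would present the argument in that spirit.

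First I would fix once and for all the integer $p$ supplied by the periodicity theorem, stressing that it depends only on $\Gamma$ and not on the chosen frieze. Because every frieze $F$ of type $\Gamma$ satisfies $F_{i,k}=F_{i,k+p}$ for all $(i,k)$, such an $F$ is completely determined by its restriction to the finite index set $\mathbb{Z}_{[1,n]}\times\{1,2,\dots,p\}$; equivalently, two friezes of type $\Gamma$ agreeing on these $np$ positions coincide.

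Next I would invoke Proposition~\ref{prop}: for every frieze $F$ and every $(i,k)$, the value $F_{i,k}$ is a positive integer with $1\le F_{i,k}\le 2^{pb_i}$, where $b_i$ is the $i$th row sum of $\mathsf{C}^{-1}$ and hence depends only on $\Gamma$. Thus each of the $np$ determining entries lies in the finite set $\{1,2,\dots,\lfloor 2^{pb_i}\rfloor\}$, so the number of friezes of type $\Gamma$ is at most
\[
\prod_{i=1}^{n}\left\lfloor 2^{pb_i}\right\rfloor^{\,p},
\]
which is finite. This simultaneously yields the explicit (and, as advertised, inefficient) bound promised in the abstract.

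The only point warranting a word of care — and the step I would flag — is \emph{uniformity}: both the period $p$ and the row sums $b_i$ must be independent of $F$. That is exactly what the periodicity theorem and Proposition~\ref{prop} deliver, so beyond assembling these inputs there is no genuine obstacle; the finiteness follows at once.
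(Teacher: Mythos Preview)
Your argument is correct and is essentially identical to the paper's own proof: both use periodicity to reduce to the $np$ entries $F_{i,k}$ with $1\le k\le p$, then bound each by $2^{pb_i}$ via Proposition~\ref{prop}, yielding a finite product bound. The only cosmetic differences are your added floor and your explicit emphasis on uniformity of $p$ and $b_i$.
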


\begin{proof}
Each $F_{i,k}$ must be an integer between $1$ and $2^{pb_i}$, and $F$ is determined by the values $F_{i,k}$ with $1\leq k\leq p$ and $1\leq i\leq n$. So, there are at most $2^{p^2\sum_i b_i}$-many friezes of type $\Gamma$.
\end{proof}



\section{Enumeration}

After finiteness, the natural question is: how many friezes are there of each type? 
\begin{itemize}
    \item As previously mentioned, friezes of type $A_n$ are counted by Catalan numbers.
    \item Types $B_n,C_n,D_n,$ and $G_2$ were counted by Fontaine and Plamondon in \cite{FP16b}.
    \item Fontaine and Plamondon also used a computer search to give conjectural counts in types $E_6,E_7,E_8,$ and $F_4$ \cite{FP16b}. The counts for $E_6$ and $F_4$ were confirmed in \cite{BFGST21}, but the counts of $4400$ and $26952$ in type $E_7$ and $E_8$ remain unconfirmed.
\end{itemize}
%
%
Unfortunately, while the techniques of this paper characterize finite spaces of possible friezes for the open types, these spaces are still impractically large to check with current computers.



\begin{ex}
Consider $\Gamma=E_8$. Summing the rows of $\mathsf{C}^{-1}$ (see Example \ref{ex: E8a}) gives
\[
(b_1,b_2,b_3,b_4,b_5,b_6,b_7,b_8)
=(
46,
68,
91,
135,
110,
84,
57,
29)
\]
Since every frieze of type $E_8$ has period $p=16$, such a frieze is determined by the entries $F_{i,k}$ for $1\leq k\leq 16$. Choosing a value for each such $F_{i,k}$ between $1$ and $2^{pb_i}$ gives
%
\[
\prod_{i=1}^8\prod_{k=1}^{16}  2^{16 b_i} = 2^{16^2 \prod_i b_i} =2^{158720} 
\]
possible friezes of type $E_8$. Oof.
%
%
%
There are a few optimizations available.
\begin{itemize}
    \item A frieze of type $E_8$ is (almost) determined by its values on Row $8$.\footnote{Choosing Row 8 determines Rows 7, 6, 5, and 4 by the mesh relations. The remaining rows require a choosing factorizations of some integers, but this adds a negligible number of cases to a computer search.}
    \item Instead of bounding the individual entries, bound the product of the entries in a row.
\end{itemize}
This reduces the problem to considering all $16$-tuples
$ F_{8,1},F_{8,2},F_{8,3},...,F_{8,16}$ in $\mathbb{Z}_{\geq1}$
with
\[\prod_{k=1}^{16} F_{8,k}\leq 2^{16 b_8} = 2^{16\cdot 29} = 2^{464}\]
Unfortunately, $2^{464}$ is still too many possibilities for a brute force search.


By \cite[Theorem C]{GM22},  Fontaine and Plamondon's conjectural counts would be confirmed by showing there are $0$ and $4$ friezes in types $E_7$ and $E_8$ (respectively) whose entries are all at least 2. 
For such friezes $F$, the upper bound on the $i$th entry of $\mathsf{C}\vec{\mathbf{a}}(F)$ can be reduced to 
$\log_2(1+2^{-d_i})$, where $d_i:=\sum_{j\neq i} -\mathsf{C}_{i,j}$. This translates into bounds
\[ a_i(F)\leq \sum_{j=1}^n \mathsf{C}_{i,j}^{-1}\log_2(1+2^{-d_j}) 
\text{ and }
\prod_{k=1}^pF_{i,k} \leq 
\prod_{j=1}^n(1+2^{-d_j})^{p\mathsf{C}_{i,j}^{-1}}
\]
For $\Gamma=E_8$ and $i=8$, this gives a bound
\[
\prod_{k=1}^{16}F_{8,k} \leq 
\left(\frac{151875}{16384}\right)^{16}
\approx 2^{51}
\]
This is approaching the realm of possibility, but further optimization is needed.
\end{ex}



\section*{Acknowledgements}

\begin{itemize}
\item Emily Gunawan, whose extensive collaborations on \cite{GM22} led to this idea.
\item Pierre-Guy Plamondon, who pushed the author to bound entries of a frieze.
\item Antoine de Saint Germain, for pushing the author to write this up.
\end{itemize}

\bibliographystyle{alpha}
\bibliography{myunibib}

\end{document}